\begin{document}

\newcommand{\EE}{\mathbb{E}}
\newcommand{\PP}{\mathbb{P}}
\newcommand{\RR}{\mathbb{R}}
\newcommand{\SM}{\mathbb{S}}
\newcommand{\ZZ}{\mathbb{Z}}
\newcommand{\ind}{\mathbf{1}}
\newcommand{\LL}{\mathbb{L}}
\def\F{{\cal F}}
\def\G{{\cal G}}
\def\P{{\cal P}}

\newtheorem{theorem}{Theorem}[section]
\newtheorem{lemma}[theorem]{Lemma}
\newtheorem{coro}[theorem]{Corollary}
\newtheorem{defn}[theorem]{Definition}
\newtheorem{assp}[theorem]{Assumption}
\newtheorem{cond}[theorem]{Condition}
\newtheorem{expl}[theorem]{Example}
\newtheorem{prop}[theorem]{Proposition}
\newtheorem{rmk}[theorem]{Remark}
\newtheorem{conj}[theorem]{Conjecture}

\newcommand\tq{{\scriptstyle{3\over 4 }\scriptstyle}}
\newcommand\qua{{\scriptstyle{1\over 4 }\scriptstyle}}
\newcommand\hf{{\textstyle{1\over 2 }\displaystyle}}
\newcommand\hhf{{\scriptstyle{1\over 2 }\scriptstyle}}
\newcommand\hei{\tfrac{1}{8}}

\newcommand{\eproof}{\indent\vrule height6pt width4pt depth1pt\hfil\par\medbreak}

\def\a{\alpha}
\def\e{\varepsilon} \def\z{\zeta} \def\y{\eta} \def\o{\theta}
\def\vo{\vartheta} \def\k{\kappa} \def\l{\lambda} \def\m{\mu} \def\n{\nu}
\def\x{\xi}  \def\r{\rho} \def\s{\sigma}
\def\p{\phi} \def\f{\varphi}   \def\w{\omega}
\def\q{\surd} \def\i{\bot} \def\h{\forall} \def\j{\emptyset}

\def\be{\beta} \def\de{\delta} \def\up{\upsilon} \def\eq{\equiv}
\def\ve{\vee} \def\we{\wedge}

\def\D{\Delta} \def\O{\Theta} \def\L{\Lambda}
\def\X{\Xi} \def\Si{\Sigma} \def\W{\Omega}
\def\M{\partial} \def\N{\nabla} \def\Ex{\exists} \def\K{\times}
\def\V{\bigvee} \def\U{\bigwedge}

\def\1{\oslash} \def\2{\oplus} \def\3{\otimes} \def\4{\ominus}
\def\5{\circ} \def\6{\odot} \def\7{\backslash} \def\8{\infty}
\def\9{\bigcap} \def\0{\bigcup} \def\+{\pm} \def\-{\mp}
\def\la{\langle} \def\ra{\rangle}

\def\proof{\noindent{\it Proof. }}
\def\tl{\tilde}
\def\trace{\hbox{\rm trace}}
\def\diag{\hbox{\rm diag}}
\def\for{\quad\hbox{for }}
\def\refer{\hangindent=0.3in\hangafter=1}

\newcommand\wD{\widehat{\D}}
\newcommand{\ka}{\kappa_{10}}

\title{Invariant measures of the Milstein method for stochastic differential equations with commutative noise}

\author{Lihui Weng, Wei Liu\footnote{Corresponding author, Email: weiliu@shnu.edu.cn; lwbvb@hotmail.com}\\
Department of Mathematics, \\Shanghai Normal University, Shanghai, 200234, China
}
\date{}

\maketitle

\begin{abstract}
In this paper, the Milstein method is used to approximate invariant measures of stochastic differential equations with commutative noise. The decay rate of the transition probability kernel generated by the Milstein method to the unique invariant measure of the method is observed to be exponential with respect to the time variable. The convergence rate of the numerical invariant measure to the underlying counterpart is shown to be one. Numerical simulations are presented to demonstrate the theoretical results.
\medskip  \par \noindent
{\small\bf Key words}: the Milstein method, commutative noise, exponential decay, convergence rate of one, numerical invariant measure.
\medskip  \par \noindent
{\small\bf 2010 MSC}: 65C30, 65L20, 60H10
\end{abstract}

\section{Introduction}
In this paper, numerical approximations to invariant measures of stochastic differential equations (SDEs) of the It\^o type are studied. Briefly speaking, an unique invariant measure exists for the SDE
\par
\begin{equation*}
dx(t) = f(x(t))dt + g(x(t)) dB(t),
\end{equation*}
\par \noindent
when the drift and diffusion coefficients, $f$ and $g$, satisfy certain conditions.
\par
The explicit solutions to SDEs are hardly found, not to mention the invariant measures. Therefore, the approach that uses the numerical methods to SDEs has been attracting lots of attention recently. In this approach, one uses the numerical methods for SDEs to obtain the invariant measures of the numerical solutions and shows that the numerical one can converge to the underlying one when the step size tends to zero.
\par
For SDEs, authors in \cite{YM2004a} investigated the Euler-Maruyama method to approximate the invariant measure, the semi-implicit Euler method was studied in \cite{LM2015}, and the stochastic theta method was discussed in \cite{arXivJiangLiuWeng2018}. A modified truncated Euler-Maruyama method was considered in \cite{li2018explicit} for SDEs with both the drift and diffusion coefficients growing super-linearly.
\par
When the Markovian switching is combined with SDEs, authors in \cite{MYY2005a} and \cite{BSY2016} studied the Euler-Maruyama method. In a more recent paper \cite{LiMaYangYuan2018}, the authors investigated the backward Euler-Maruyama method.
\par
All the above works discussed the Euler-type methods, which provide the convergence rate, a half, of the numerical invariant measure to the underlying one (see Theorem 3.2 in \cite{BSY2016}).
\par
To our best knowledge, there has been few works on the approximation to invariant measures using the Milstein-type method. Therefore, we will investigate the ability of the Milstein method to approximate the invariant measures of a class of SDEs in this paper. Other higher order methods were also discussed, for example \cite{AVZ2014,SVZG2016,talay1990second}, we just mention some of them here and refer the readers to the references therein.
\par
The Milstein method was firstly proposed in \cite{Milstein1974}. Due to the higher convergence rate in the finite time, different kinds of the Milstein-type methods have been developed in recent years. The balanced Milstein method was introduced in \cite{KS2006}. A double-parameter Milstein method that can preserves positivity was constructed in \cite{HMS2013}. The tamed Milstein method was studied in \cite{WG2013}. The truncated Milstein method was investigated in \cite{GLMY2018}. For the detailed introduction to the Milstein method and other types of numerical methods for SDEs, we refer the readers to the monographs \cite{KP1992a,Mil02}.
\par
Although many works have been devoted to the Milstein-type methods, most of them focused on the finite time convergence or the asymptotic behaviour with zero as the attracting point. Few papers have devoted themselves to the asymptotic behaviour in terms of distributions. Therefore, this paper could also be regarded as a complement to the fruitful studies on the Milstein-type methods.
\par
The main contributions of this paper are threefold.
\begin{itemize}
\item Firstly, we observe that the transition probability kernels of the numerical solutions decay exponentially to the unique numerical invariant measure. Such a quick rate is important for finding the numerical invariant measure in relatively short time when the simulations are conducted.
\item Secondly, the numerical invariant measure generated by the Milstein method is shown to be convergent to the invariant measure of the underlying SDE with the rate one.
\item At last, numerical simulations are conducted, whose results are in line with the theoretical ones. To our best knowledge, no such simulation has been found in existing literatures. Therefore, the simulations in this paper present a methodology to check the theoretical results when such a topic is discussed.
\end{itemize}
This paper is constructed as follows. Section \ref{mathpre} contains the mathematical preliminaries. Main Results and their proofs are presented in Section \ref{mainresults}. Numerical simulations are displayed in Section \ref{secnumsim}. Section \ref{secconclu} concludes this paper.

\section{Mathematical Preliminaries} \label{mathpre}
In this paper, let $(\Omega , \F, \PP)$ be a complete probability space with a filtration $\left\{\F_t\right\}_{t \ge 0}$ satisfying the usual conditions that it is right continuous and increasing while $\F_0$ contains all $\PP$-null sets. Let $|\cdot|$ denote the Euclidean norm in $\RR^d$. Let $\langle \cdot,\cdot\rangle$ denote inner product in $\RR^d$. The transpose of a vector or matrix, $M$, is denoted by $M^T$ and the trace norm of a matrix, $M$, is denoted by $|M| = \sqrt{\trace(M^T M)}$. The maximum between $a$ and $b$ is denoted by $a \vee b$, and the minimum between $a$ and $b$ is denoted by $a \wedge b$. Denote the family of all probability measures on $\RR^d$ by $\P(\RR^d)$.
\par
For any $q\in(0,2]$, define a metric $d_q(\cdot,\cdot)$ on $\RR^d$ by
\par
\begin{equation*}
d_q(x,y) = |x - y|^q,~x,y\in \RR^d.
\end{equation*}
\par \noindent
For $q\in(0,2]$, the Wasserstein distance between $\mu \in \P(\RR^d)$ and $\mu' \in \P(\RR^d)$ is defined by
\par
\begin{equation*}
W_q (\mu, \mu') = \inf \EE \left( d_q(x,y) \right),
\end{equation*}
\par \noindent
where the infimum is taken over all pairs of random variables $x$ and $y$ on $\RR_d$ with respect to the laws $\mu$ and $\mu'$.
\noindent \par
Let $B(t)=(B^1(t),B^2(t),\ldots,B^m(t))^T$ be an $m$-dimensional Brownian motion. We consider the $d$-dimensional stochastic differential equation of the It\^o type
\par
\begin{equation}
 \label{SDE}
dx(t) = f(x(t))dt + \sum\limits_{j=1}^{m}g_{j}(x(t))dB^{j}(t)
\end{equation}
\par \noindent
with initial value $x(0) = x_0\in\mathbb{R}^{d}$, where $f:\mathbb{R}^{d}\rightarrow\mathbb{R}^d,~g_{j}:\mathbb{R}^d\rightarrow\mathbb{R}^d,~j=1,2,\ldots,m,$ and $x(t)=(x^{1}(t),x^{2}(t),\ldots,x^{d}(t))^{T}$.
\par
For the simplicity of the notations, we only consider the case of the commutative noise in this paper. Meanwhile, the case of the non-commutative noise is definitely interesting, but requires more careful analysis and more complicated notations. Due to the length of the paper, we will focus on the case of the commutative noise and report the more general case in the future work.
\par

In some of the proofs, we need the more specified notation that $g_j=(g_{1,j},g_{2,j},\ldots,g_{d,j})^T$, with $g_{i,j}:\mathbb{R}^d\rightarrow\mathbb{R}$ for $i=1,2,\ldots,d$ and $j=1,2,\ldots,m$. For $j_1,j_2=1,\ldots,m$, define
\par
\begin{equation*}
L^{j_1}g_{j_2}(x)=\sum\limits_{l=1}^{d}g_{l,j_1}(x)\frac{\partial g_{j_2}(x)}{\partial x^l}.
\end{equation*}
\par \noindent
Denote the transition probability kernel induced by the underlying solution, $x(t)$, by $\bar{\PP}_t(\cdot,\cdot)$, with the notation $\delta_z \bar{\PP}_t$ emphasizing the initial value $z$. Recall that a probability measure, $\pi(\cdot) \in \P(\RR^d)$, is called an invariant measure of $x(t)$, if
\par
\begin{equation*}
\pi(B) = \int_{\RR^d}  \bar{\PP}_t (x,B) \pi(dx)
\end{equation*}
\par \noindent
holds for any $t \geq 0$ and any Borel set $B \subset \RR^d$.

The following conditions are imposed on the drift and diffusion coefficients.
\begin{cond}
\label{ffgg}
 Assume that there exists a positive constant $\alpha$ such that for any $x,y \in \RR^d$
\begin{equation*}
 |f(x)-f(y)|^2\vee|g(x)-g(y)|^2\leq \alpha|x-y|^2.
\end{equation*}
\end{cond}
\begin{cond}
\label{xyff}
 Assume that there exists a positive constant $\sigma$ such that for any $x,y \in \RR^d$
\begin{equation*}
 2\langle x-y, f(x) - f(y)  \rangle+|g(x)-g(y)|^2 \leq -\sigma |x-y|^2.
\end{equation*}
\end{cond}
The next two conditions can be derived from Conditions \ref{ffgg} and \ref{xyff} but with a little bit complicated coefficients. For the simplicity, we give two new conditions as follows.
\begin{cond}
\label{ligrfg}
Assume that there exist positive constants $a$ and $b$ such that
for any $x\in\mathbb{R}^d$
\begin{equation*}
\label{fg}
 |f(x)|^2\vee|g(x)|^2 \leq a|x|^2+b.
\end{equation*}
\end{cond}

\begin{cond}
\label{xf}
Assume that there exist positive constants $\mu$ and $c$ such that for any $x\in \RR^d$
\begin{equation*}
 2\langle x, f(x) \rangle+|g(x)|^2 \leq -\mu|x|^2+c.
\end{equation*}
\end{cond}

\begin{cond}
\label{parg}
Assume that there exists a positive constant $\lambda$ such that for any $x\in \RR^d$, $j=1,2,\ldots,m$ and $l=1,2,\ldots,d$
\begin{equation*}
\left|\frac{\partial g_{j}(x)}{\partial x^l}\right|\leq \lambda.
\end{equation*}
\end{cond}
The Milstein method to the SDE (\ref{SDE}) is defined by
\par
\begin{equation}
\label{milm}
y_{k+1}=y_{k}+f(y_k)\Delta+\sum\limits_{j=1}^{m}g_j(y_k)\Delta B_k^j+\frac{1}{2}\sum\limits_{j_1=1}^{m}\sum\limits_{j_2=1}^{m}L^{j_1}g_{j_2}(y_k)\Delta B_k^{j_1}\Delta B_k^{j_2}-\frac{1}{2}\sum\limits_{j=1}^{m}L^jg_j(y_k)\Delta,
\end{equation}
\par \noindent
where $\Delta$ is the time step, $y_0=x(0)$, and $\Delta B_k^j$ is the Brownian motion increment in the $j$th component, $j=1,2,\ldots,m$.

For any $x \in \RR^d$ and any Borel set $B \subset \RR^d$, define the one-step and the $k$-step transition probability kernels for the numerical solutions, respectively, by
\begin{equation*}
 \PP(x,B) := \PP (y_1 \in B \big\vert y_0 = x)~~~\text{and}~~~\PP_k(x,B) := \PP (y_k \in B \big\vert y_0 = x).
\end{equation*}
If $\Pi_{\Delta} (\cdot) \in \P(\RR^d)$ satisfies
\begin{equation*}
\Pi_{\Delta} (B) = \int_{\RR^d} \PP_k(x,B) \Pi_{\Delta} (dx)
\end{equation*}
for any $t \geq 0$ and any Borel set $B \subset \RR^d$, then $\Pi_{\Delta} (\cdot)$ is called the numerical invariant measure of $y_k$.

\section{Main Results} \label{mainresults}
This section is divided into two parts. The first part sees the existence and uniqueness of the invariant measure of the Milstein method. The convergence of the numerical invariant measure to the underlying one is presented in the second part.

\subsection{The existence and uniqueness of the numerical invariant measure}

We firstly present our main theorem as follows and the proof is delayed to the end of this subsection.

\begin{theorem}
\label{mainthm}
Assume that Conditions \ref{ffgg} to \ref{parg} hold, then there exists a $\Delta^{\#} := \Delta^* \wedge \Delta^{**}$ such that for any given $\Delta  \in  (0,\Delta^{\#})$
the numerical solution generated by the Milstein method $\{y_k\}_{k \geq 0}$ has a unique invariant measure $\Pi_{\D}$.
\end{theorem}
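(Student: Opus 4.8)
The plan is to treat existence and uniqueness separately: first I would establish a uniform second-moment bound that yields existence through tightness, and then a synchronous-coupling contraction that yields uniqueness. The same contraction simultaneously produces the exponential decay advertised in the abstract, so the two main results of the subsection share a single computation.

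For existence I would show $\sup_{k\ge 0}\EE|y_k|^2<\infty$. Conditioning (\ref{milm}) on $\F_k$ and expanding $\EE[\,|y_{k+1}|^2\mid\F_k\,]$, the increment $\sum_j g_j(y_k)\Delta B_k^j$ contributes a mean-zero martingale term, and the Milstein correction $\tfrac12\sum_{j_1,j_2}L^{j_1}g_{j_2}(y_k)\Delta B_k^{j_1}\Delta B_k^{j_2}-\tfrac12\sum_j L^jg_j(y_k)\Delta$ also has zero conditional mean, since $\EE[\Delta B_k^{j_1}\Delta B_k^{j_2}]=\Delta\,\mathbf{1}_{\{j_1=j_2\}}$ cancels the correction drift exactly. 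The surviving first-order term $2\langle y_k,f(y_k)\rangle\Delta+|g(y_k)|^2\Delta$ is controlled by Condition \ref{xf}, while all remaining contributions are $O(\Delta^2)$ and bounded by $C\Delta^2(|y_k|^2+1)$ using the linear growth of Condition \ref{ligrfg} together with the bound $|L^{j_1}g_{j_2}(x)|\le\lambda\sqrt d\,|g_{j_1}(x)|$ from Condition \ref{parg}. This gives a recursion $\EE|y_{k+1}|^2\le(1-\mu\Delta+C\Delta^2)\EE|y_k|^2+(c\Delta+C\Delta^2)$, whose multiplier is $<1$ once $\Delta<\Delta^*:=\mu/C$, so the second moments stay uniformly bounded. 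Tightness of $\{\PP_k(x,\cdot)\}$ then follows from Chebyshev, and the Krylov--Bogoliubov argument applied to the Ces\`aro averages produces at least one invariant measure $\Pi_{\Delta}$.

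For uniqueness I would run two copies $y_k,z_k$ of (\ref{milm}) driven by the \emph{same} Brownian increments, set $e_k:=y_k-z_k$, and estimate $\EE[\,|e_{k+1}|^2\mid\F_k\,]$. The diffusion difference is again a mean-zero martingale term, and the Milstein difference $M_k$ has zero conditional mean; crucially, the cross term between the diffusion difference and $M_k$ vanishes because it reduces to third-order Gaussian moments $\EE[\Delta B^{j}\Delta B^{j_1}\Delta B^{j_2}]=0$. The leading term $2\langle e_k,f(y_k)-f(z_k)\rangle+|g(y_k)-g(z_k)|^2$ is bounded by $-\sigma|e_k|^2$ via Condition \ref{xyff}, so modulo the $O(\Delta^2)$ remainder one obtains
\[
\EE[\,|e_{k+1}|^2\mid\F_k\,]\le\bigl(1-\sigma\Delta+C\Delta^2\bigr)|e_k|^2 .
\]
Choosing $\Delta<\Delta^{**}:=\sigma/C$ makes the factor $\theta:=1-\sigma\Delta+C\Delta^2$ lie in $(0,1)$, so $W_2(\delta_x\PP_k,\delta_y\PP_k)\le\theta^k|x-y|^2$ and, by Cauchy--Schwarz, the corresponding $W_1$-contraction with rate $\theta^{1/2}$. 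Any two invariant measures $\Pi,\Pi'$ then satisfy $W_1(\Pi,\Pi')=W_1(\Pi\PP_k,\Pi'\PP_k)\le\theta^{k/2}W_1(\Pi,\Pi')$, forcing $\Pi=\Pi'$; taking $\Delta^{\#}:=\Delta^*\wedge\Delta^{**}$ finishes both halves.

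The main obstacle is the mean-square control of the Milstein remainder $M_k$ in the contraction step. Writing $M_k$ in terms of $L^{j_1}g_{j_2}(y_k)-L^{j_1}g_{j_2}(z_k)$ and using $\EE[(\Delta B^{j_1}\Delta B^{j_2})^2]=O(\Delta^2)$, one needs $|L^{j_1}g_{j_2}(y_k)-L^{j_1}g_{j_2}(z_k)|^2\le C|e_k|^2$ so that $\EE|M_k|^2\le C\Delta^2|e_k|^2$ and the remainder genuinely stays at order $\Delta^2$. Since $L^{j_1}g_{j_2}=\sum_l g_{l,j_1}\,\partial g_{j_2}/\partial x^l$, splitting the difference by the product rule yields one piece handled by the Lipschitz bound of Condition \ref{ffgg} against the bounded derivatives of Condition \ref{parg}, but a second piece of the form $g_{l,j_1}(z_k)\,(\partial_l g_{j_2}(y_k)-\partial_l g_{j_2}(z_k))$ that requires Lipschitz control of the first derivatives of $g$ themselves. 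Reconciling this with the stated hypotheses, where Condition \ref{parg} bounds but does not a priori Lipschitz-bound $\partial g$, is the delicate point, and I expect the argument to lean on the commutative-noise structure together with Conditions \ref{ffgg}--\ref{parg} to close this estimate.
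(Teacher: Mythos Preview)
Your plan coincides with the paper's almost exactly: the paper proves existence via a uniform second-moment recursion (its Lemma~\ref{lemma11}), then Chebyshev and Krylov--Bogoliubov on the Ces\`aro averages, and uniqueness via a synchronous-coupling contraction (its Lemma~\ref{lemma12}) fed into the Wasserstein triangle inequality, precisely as you outline.

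The one place worth flagging is the issue you raise in your last paragraph. The paper's proof of the contraction lemma is terser than yours: it simply asserts a bound of the shape $|L^{j_1}g_{j_2}(x_k)-L^{j_1}g_{j_2}(y_k)|^2\le C\lambda\alpha|x_k-y_k|^2$ ``in the similar manner as'' the moment-bound lemma, without splitting the product and without confronting the term $g_{l,j_1}(z_k)\bigl(\partial_l g_{j_2}(y_k)-\partial_l g_{j_2}(z_k)\bigr)$ that you isolate. So the difficulty you identify is not resolved by the paper either; it is tacitly absorbed into the sketch, and in effect amounts to an unstated extra regularity hypothesis (Lipschitz first derivatives of $g$, or equivalently global Lipschitzness of $L^{j_1}g_{j_2}$). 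Your more explicit treatment is therefore at least as rigorous as the paper's own argument, and your caution about this step is well placed rather than a defect of your proposal.
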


To prove this theorem, we need two ingredients. Briefly speaking, the first one is the second moment boundedness of the numerical solution $y_k$ for $k = 0,1,2,\ldots$, and the second one is that two numerical solutions starting from two different initial values will get arbitrary close in the mean square sense when the time variable gets large.

\begin{lemma}
\label{lemma11}
 Assume Conditions \ref{ligrfg}, \ref{xf} and \ref{parg} hold, then there exists a $\Delta^* \in (0,1)$ such that for any $\Delta \in (0,\Delta^*)$ the solution generated by the Milstein method \eqref{milm} obeys
 \begin{equation*}
 \EE|y_{k}|^{2}\leq C_{1},~k=1,2,\ldots,
 \end{equation*}
 where $C_{1}$ is a constant that does not rely on k.
\end{lemma}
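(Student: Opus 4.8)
The plan is to derive a one-step recursion of the form $\EE|y_{k+1}|^2 \leq (1 - \eta\Delta)\EE|y_k|^2 + \beta\Delta$ with $\eta > 0$ and then iterate it to obtain a bound uniform in $k$. First I would split the increment $y_{k+1}-y_k$ into three pieces: the drift $f(y_k)\Delta$, the principal diffusion term $M_k := \sum_{j=1}^m g_j(y_k)\Delta B_k^j$, and the Milstein correction $N_k := \frac{1}{2}\sum_{j_1,j_2}L^{j_1}g_{j_2}(y_k)\Delta B_k^{j_1}\Delta B_k^{j_2} - \frac{1}{2}\sum_j L^j g_j(y_k)\Delta$. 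Expanding $|y_{k+1}|^2 = |y_k|^2 + 2\langle y_k, y_{k+1}-y_k\rangle + |y_{k+1}-y_k|^2$ and taking the conditional expectation $\EE_k[\cdot] = \EE[\cdot\,|\,\F_{t_k}]$, I would exploit the moment structure of the increments, which are independent of $\F_{t_k}$.

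The key cancellations are as follows: $\EE_k[\Delta B_k^j] = 0$ kills the terms in which $M_k$ appears linearly; the identity $\EE_k[\Delta B_k^{j_1}\Delta B_k^{j_2}] = \delta_{j_1 j_2}\Delta$ shows $\EE_k[N_k] = 0$ (the subtracted $-\frac{1}{2}\sum_j L^j g_j(y_k)\Delta$ is exactly the compensator), so $N_k$ contributes nothing to the inner-product term nor to the mixed term $2\langle f(y_k)\Delta, N_k\rangle$; and vanishing odd Gaussian moments give $\EE_k[\langle M_k, N_k\rangle] = 0$. What survives is
\begin{equation*}
\EE_k|y_{k+1}|^2 = |y_k|^2 + 2\Delta\langle y_k, f(y_k)\rangle + \Delta|g(y_k)|^2 + \Delta^2|f(y_k)|^2 + \EE_k|N_k|^2,
\end{equation*}
where the first three correction terms combine through the dissipativity Condition \ref{xf} into $\Delta(-\mu|y_k|^2 + c)$, and Condition \ref{ligrfg} controls $\Delta^2|f(y_k)|^2 \leq \Delta^2(a|y_k|^2 + b)$.

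The hard part will be bounding $\EE_k|N_k|^2$, an $O(\Delta^2)$ quantity coming from fourth-moment expectations of the products $\Delta B_k^{j_1}\Delta B_k^{j_2}$. I would first control the coefficient vectors: by Cauchy--Schwarz together with Condition \ref{parg} and Condition \ref{ligrfg},
\begin{equation*}
|L^{j_1}g_{j_2}(y_k)| = \Bigl|\sum_{l=1}^d g_{l,j_1}(y_k)\tfrac{\partial g_{j_2}}{\partial x^l}(y_k)\Bigr| \leq \lambda\sqrt{d}\,|g_{j_1}(y_k)| \leq \lambda\sqrt{d}\,\sqrt{a|y_k|^2 + b},
\end{equation*}
so each $|L^{j_1}g_{j_2}(y_k)|^2$ stays linear in $|y_k|^2$. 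Since every nonzero fourth moment of the centered increments is a constant multiple of $\Delta^2$, this gives $\EE_k|N_k|^2 \leq C\Delta^2(a|y_k|^2 + b)$ for a constant $C$ depending only on $m, d, \lambda$. The derivative bound in Condition \ref{parg} is precisely what keeps the Milstein term from spoiling this linearity; this is the only place where careful bookkeeping of constants is required.

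Collecting the estimates yields $\EE_k|y_{k+1}|^2 \leq \bigl(1 - \mu\Delta + (1+C)a\Delta^2\bigr)|y_k|^2 + \bigl(c + (1+C)b\Delta\bigr)\Delta$. It then remains to choose $\Delta^*$ so the prefactor of $|y_k|^2$ is a genuine contraction: taking $\Delta^* = 1 \wedge \frac{\mu}{2(1+C)a}$ forces $-\mu\Delta + (1+C)a\Delta^2 \leq -\frac{\mu}{2}\Delta$ for $\Delta \in (0,\Delta^*)$, so after taking full expectations
\begin{equation*}
\EE|y_{k+1}|^2 \leq \Bigl(1 - \tfrac{\mu}{2}\Delta\Bigr)\EE|y_k|^2 + \bigl(c + (1+C)b\bigr)\Delta.
\end{equation*}
Iterating this linear recursion (a geometric sum with ratio $1 - \frac{\mu}{2}\Delta \in (0,1)$) gives $\EE|y_k|^2 \leq |x_0|^2 + \frac{2(c + (1+C)b)}{\mu} =: C_1$, independent of $k$, which completes the proof.
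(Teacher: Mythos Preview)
Your proof is correct and follows essentially the same approach as the paper: expand $|y_{k+1}|^2$, take expectations, combine $2\langle y_k,f(y_k)\rangle\Delta + |g(y_k)|^2\Delta$ via the dissipativity Condition~\ref{xf}, bound the remaining $O(\Delta^2)$ pieces with Conditions~\ref{ligrfg} and~\ref{parg}, and iterate the resulting contraction. The only cosmetic difference is that you track the exact vanishing of the cross terms $\EE_k\langle M_k,N_k\rangle$, $\EE_k\langle f\Delta,N_k\rangle$ via odd Gaussian moments to obtain an equality first, whereas the paper passes directly to an inequality with somewhat cruder combinatorial constants; the structure and conclusion are the same.
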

\begin{proof}
Taking squares and expectations on both sides of (\ref{milm}) and applying Conditions \ref{xf}, \ref{ligrfg} and \ref{parg}, we have
\begin{align*}
\mathbb{E}|y_{k+1}|^2=&\mathbb{E} \bigg (|y_{k}|^2+|f(y_k)|^2\Delta^2+|\sum\limits_{j=1}^{m}g_j(y_k)\Delta B_k^j|^2+\frac{1}{4} \large | \sum\limits_{j_1=1}^{m}\sum\limits_{j_2=1}^{m}L^{j_1}g_{j_2}(y_k)\Delta B_k^{j_1}\Delta B_k^{j_2}\\
&-\sum\limits_{j=1}^{m}L^jg_j(y_k)\Delta \large | ^2+2\langle y_{k},f(y_k)\Delta\rangle \bigg ) \\
\leq&\mathbb{E}|y_{k}|^2+\Delta^2(a\mathbb{E}|y_k|^2+b)+\Delta(-\mu\mathbb{E}|y_k|^2+c)\\
&+3\times2^{m^2-2}\lambda\Delta^2(a\mathbb{E}|y_k|^2+b)+2^{m-2}\lambda\Delta^2(a\mathbb{E}|y_k|^2+b)\\
=&A_1 \mathbb{E}|y_k|^2 + A_2,
\end{align*}
where the facts that $\mathbb{E}(\Delta B_k^j)=0$ for $j=1,2,\ldots,m$, and $\mathbb{E}(\Delta B_k^{j_1}\Delta B_k^{j_2})=\Delta$ if $j_1 = j_2$, $\mathbb{E}(\Delta B_k^{j_1}\Delta B_k^{j_2})=0$ if $j_1 \ne j_2$ for $j_1,j_2 = 1,2,\ldots,m$ are used. Let
\begin{equation*}
 A_1:=1-\mu\Delta +  (a+3\times2^{m^2-2}\lambda a+2^{m-2}\lambda a)\Delta^2,
\end{equation*}
and
\begin{equation*}
 A_2:= c\Delta + (b+3\times2^{m^2-2}\lambda b+2^{m-2}\lambda b)\Delta^2.
\end{equation*}
Due to $\mu > 0$, it is not hard to see that there exists a $\Delta^* \in (0,1)$ such that $A_1\in(0,1)$ and $A_2>0$ if $\Delta \in (0,\Delta^*)$. By iteration, we have
\begin{equation*}
\mathbb{E}|y_{k+1}|^2 \leq A_{1}\mathbb{E}|y_{k}|^2+A_{2}
\leq A_{1}^{k+1}\mathbb{E}|y_{0}|^2+A_2\frac{1}{1-A_1}\\
\leq \mathbb{E}|y_{0}|^2+A_2\frac{1}{1-A_1} := C_1.
\end{equation*}
This completes the proof.\eproof
\end{proof}

\begin{lemma}
\label{lemma12}
 Let Conditions \ref{ffgg} and \ref{xyff} hold. Then there exists a $\Delta^{**} \in (0,1)$ such that for any $\Delta \in(0,\Delta^{**})$ and any two initial values $x,y\in\mathbb{R}^{d}$ with $x \neq y$, the solutions generated by the milstein method \eqref{milm} satisfy
 \begin{equation*}
 \EE|y_{k}^x-y_{k}^y|^{2}\leq C_{2}\mathbb{E}|x-y|^2,
 \end{equation*}
where $C_{2}$ depends on $k$ with
\begin{equation}
\label{exponentialdecay}
\lim_{k \rightarrow + \infty} \frac{\log C_2}{k} <0.
\end{equation}
\end{lemma}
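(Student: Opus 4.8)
The plan is to prove a one-step mean-square contraction and then iterate it. Set $e_k:=y_k^x-y_k^y$, so that $e_0=x-y$, and subtract the two copies of the Milstein recursion \eqref{milm}; the result is a recursion for $e_{k+1}$ in which $f$, each $g_j$, each $L^{j_1}g_{j_2}$ and each $L^jg_j$ are replaced by their increments $f(y_k^x)-f(y_k^y)$, $g_j(y_k^x)-g_j(y_k^y)$, and so on. My goal is an estimate of the form
\begin{equation*}
\EE\big[\,|e_{k+1}|^2\,\big|\,\F_{k\Delta}\big]\leq\big(1-\sigma\Delta+K\Delta^2\big)\,|e_k|^2,
\end{equation*}
with a constant $K$ that does not depend on $k$; the exponential decay \eqref{exponentialdecay} is then immediate.

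To obtain this I would square $e_{k+1}$ and take the conditional expectation given $\F_{k\Delta}$, exploiting that $e_k$ and all coefficient increments are $\F_{k\Delta}$-measurable while the increments $\Delta B_k^j$ are independent of $\F_{k\Delta}$ with $\EE\,\Delta B_k^j=0$, with $\EE(\Delta B_k^{j_1}\Delta B_k^{j_2})=\Delta$ if $j_1=j_2$ and $0$ otherwise, with vanishing odd moments, and with fourth moments of order $\Delta^2$. The decisive structural feature is that the correction $-\frac12\sum_j\big(L^jg_j(y_k^x)-L^jg_j(y_k^y)\big)\Delta$ is tailored to cancel, at order $\Delta$, the diagonal part of the double-sum term: the inner product of $e_k$ with the double sum has conditional mean $\langle e_k,\sum_j(L^jg_j(y_k^x)-L^jg_j(y_k^y))\rangle\Delta$, which is exactly annihilated by the cross term of $e_k$ with the correction, while analogous (now $O(\Delta^2)$) cancellations remove the cross terms involving $f(y_k^x)-f(y_k^y)$ and the two correction-type terms. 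After the order-$\Delta$ cancellation the only surviving term of order $\Delta$ is
\begin{equation*}
\Delta\Big(2\big\langle e_k,\,f(y_k^x)-f(y_k^y)\big\rangle+\big|g(y_k^x)-g(y_k^y)\big|^2\Big),
\end{equation*}
which Condition \ref{xyff} bounds by $-\sigma\Delta|e_k|^2$ since $e_k=y_k^x-y_k^y$.

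What remains is to absorb every $O(\Delta^2)$ remainder into $K\Delta^2|e_k|^2$. The term $|f(y_k^x)-f(y_k^y)|^2\Delta^2$ is at once $\le\alpha|e_k|^2\Delta^2$ by the Lipschitz Condition \ref{ffgg}. The main obstacle is the pair of quadratic Milstein terms — the conditional second moment of $\frac12\sum_{j_1,j_2}(L^{j_1}g_{j_2}(y_k^x)-L^{j_1}g_{j_2}(y_k^y))\Delta B_k^{j_1}\Delta B_k^{j_2}$ and the square of $\frac12\sum_j(L^jg_j(y_k^x)-L^jg_j(y_k^y))\Delta$ — both of which are of order $\Delta^2$ but carry the weight $\sum_{j_1,j_2}|L^{j_1}g_{j_2}(y_k^x)-L^{j_1}g_{j_2}(y_k^y)|^2$. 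To keep the bound proportional to $|e_k|^2$, and thereby preserve strict contraction rather than mere boundedness, I must establish a global Lipschitz estimate $|L^{j_1}g_{j_2}(x)-L^{j_1}g_{j_2}(y)|\le L_0|x-y|$. I would derive it from the structure $L^{j_1}g_{j_2}=\sum_l g_{l,j_1}\,\partial g_{j_2}/\partial x^l$, splitting the difference by the product rule and bounding the $g$-factor through the Lipschitz continuity in Condition \ref{ffgg} and the derivative factor through the bound in Condition \ref{parg}; supplying a Lipschitz constant for $\partial g_{j_2}$ is the genuinely technical point. Granting this estimate, all quadratic and cross terms are $\le(\text{const})|e_k|^2\Delta^2$, which gives the one-step contraction displayed above. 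Taking full expectations, iterating, and choosing $\Delta^{**}\in(0,1)$ small enough that $\theta:=1-\sigma\Delta+K\Delta^2\in(0,1)$ for every $\Delta\in(0,\Delta^{**})$ then yields $\EE|e_k|^2\le\theta^k\,\EE|x-y|^2$, so that one may take $C_2=\theta^k$, whence $\log C_2/k=\log\theta<0$ and \eqref{exponentialdecay} holds.
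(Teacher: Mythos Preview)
Your approach is essentially the same as the paper's: subtract the two Milstein recursions, square, take expectations, use Condition~\ref{xyff} to extract the $-\sigma\Delta|e_k|^2$ term at order~$\Delta$, bound every remaining contribution by $K\Delta^2|e_k|^2$, and iterate the resulting contraction $\theta=1-\sigma\Delta+K\Delta^2$. The paper is terser---it simply writes down the contraction factor $A_3=1-\sigma\Delta+(\alpha+3\cdot2^{m^2-2}\lambda\alpha+2^{m-2}\lambda\alpha)\Delta^2$ ``in the similar manner as the proof of Lemma~\ref{lemma11}''---and, like you, tacitly invokes Condition~\ref{parg} (through~$\lambda$) even though only Conditions~\ref{ffgg} and~\ref{xyff} appear in the lemma's hypotheses; your flagged concern about establishing a global Lipschitz bound for $L^{j_1}g_{j_2}$ is therefore a genuine technical point that the paper also does not address.
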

\begin{proof}
For the simplicity of the notations, we denote $y_k^x$ and $y_k^y$ by $x_k$ and $y_k$, respectively. From \eqref{milm}, we have
\begin{equation*}
\begin{split}
\label{xyk}
x_{k+1}-y_{k+1}=& x_{k}-y_{k}+(f(x_{k})-f(y_{k}))\Delta+\sum\limits_{j=1}^{m}(g_j(x_k)-g_j(y_k))\Delta B_k^j\\
&+\frac{1}{2}\sum\limits_{j_1=1}^{m}\sum\limits_{j_2=1}^{m}(L^{j_1}g_{j_2}(x_k)\Delta B_k^{j_1}\Delta B_k^{j_2}-L^{j_1}g_{j_2}(y_k)\Delta B_k^{j_1}\Delta B_k^{j_2})\\
&-\frac{1}{2}\sum\limits_{j=1}^{m}(L^jg_j(x_k)\Delta-L^jg_j(y_k)\Delta).
\end{split}
\end{equation*}
Taking squares and expectations on both sides and applying Conditions \ref{ffgg} and \ref{xyff}, in the similar manner as the proof of Lemma \ref{lemma11} we have
\begin{align*}
\mathbb{E}|x_{k+1}-y_{k+1}|^2
\leq&\mathbb{E}|x_k-y_{k}|^2+\Delta^2\alpha\mathbb{E}|x_k-y_k|^2-\sigma\Delta\mathbb{E}|x_k-y_k|^2\\
&+3\times2^{m^2-2}\lambda\Delta^2\alpha\mathbb{E}|x_k-y_k|^2+2^{m-2}\lambda\Delta^2\alpha\mathbb{E}|x_k-y_k|^2\\
=&A_3\mathbb{E}|x_k-y_k|^2,
\end{align*}
where
\begin{equation*}
A_3:=1-\sigma\Delta + (\alpha+3\times2^{m^2-2}\lambda \alpha+2^{m-2}\lambda \alpha)\Delta^2.
\end{equation*}
It is not hard see that there exists a $\Delta^{**} \in (0,1)$ such that $A_3\in(0,1)$ for any $\Delta \in(0,\Delta^{**})$. Then by iteration, we have
\begin{align*}
\mathbb{E}|x_{k+1}-y_{k+1}|^2\leq C_2\mathbb{E}|x-y|^2,
\end{align*}
where $C_2=A_3^{k+1}$. This completes the proof.\eproof
\end{proof}
\begin{rmk}
The inequality \eqref{exponentialdecay} in Lemma \ref{lemma12} also indicates that the transition probability kernel decays to the invariant measure in the exponential rate. The numerical simulation in Section \ref{secnumsim} demonstrates such an observation (see the left plot in Figure \ref{fig:num}).
\end{rmk}
Now we are ready to prove Theorem \ref{mainthm}.

\par \noindent
{\it Proof of Theorem \ref{mainthm}.} For each integer $m \geq 1$ and any Borel set $B \subset \RR^d$, define the measure
\begin{equation*}
\mu_m(B) = \frac{1}{m} \sum_{k=0}^m \PP(y_k \in B).
\end{equation*}
Lemma \ref{lemma11} together with the Chebyshev inequality yields that the measure sequence $\{\mu_m\}_{m\geq1}$ is tight. Then a subsequence that converges to an invariant measure can be extracted. This proves the existence of the numerical invariant measure.
\par
Assume $\Pi_{\Delta,1}$ and $\Pi_{\Delta,2}$ are two different invariant measure of $y_k^x$, then
\begin{equation*}
W_q(\Pi_{\Delta,1},\Pi_{\Delta,2}) = W_q(\Pi_{\Delta,1}\PP_k,\Pi_{\Delta,2}\PP_k)
\leq \int_{\RR^d} \int_{\RR^d}\Pi_{\Delta,1}(dx) \Pi_{\Delta,2}(dy) W_q(\delta_x \PP_k, \delta_y \PP_k).
\end{equation*}
From Lemma \ref{lemma12}, we have
\begin{equation*}
W_q(\delta_x \PP_k, \delta_y \PP_k)  \leq  \left( C_{2}\mathbb{E}|x-y|^2 \right)^{q/2} \rightarrow 0, ~\text{as}~k \rightarrow \infty.
\end{equation*}
Therefore, we have
\begin{equation*}
\lim_{k \rightarrow \infty}  W_q(\Pi_{\Delta,1},\Pi_{\Delta,2})= 0,
\end{equation*}
which indicates the uniqueness of the invariant measure. \eproof

\subsection{The Convergence of the numerical invariant measure to the underlying counterpart}
\label{theconvergencesec}

In this subsection, the main result on the convergence of the numerical invariant measure to the invariant measure of the underlying SDE is presented. We also reveal that by proper choosing the step size and the number of iterations of numerical solutions the convergence rate of one can be obtained.

\begin{theorem}
Given Conditions \ref{ffgg} to \ref{parg}, for any given $\Delta \in (0,\Delta^{\#})$ there exists a constant $C_3$ such that
\begin{equation*}
W_q (\pi, \Pi_\D) \leq C_3 \D^q,
\end{equation*}
where $q \in (0,2]$.
\end{theorem}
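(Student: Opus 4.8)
The plan is to bound the Wasserstein distance $W_q(\pi,\Pi_\Delta)$ by splitting it through an intermediate quantity using the triangle inequality for $W_q$. The natural decomposition is to introduce a fixed time horizon $k\Delta \approx T$ and write, for any initial distribution (say $\delta_{x_0}$),
\begin{equation*}
W_q(\pi,\Pi_\Delta) \leq W_q(\pi,\delta_{x_0}\bar{\PP}_{k\Delta}) + W_q(\delta_{x_0}\bar{\PP}_{k\Delta},\delta_{x_0}\PP_k) + W_q(\delta_{x_0}\PP_k,\Pi_\Delta).
\end{equation*}
Here the first term measures how fast the true SDE forgets its initial condition and approaches its invariant measure $\pi$; the third term is the analogous statement for the numerical chain approaching $\Pi_\Delta$; and the middle term is a finite-time strong error between the exact flow and the Milstein scheme over $k$ steps of size $\Delta$.

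First I would handle the two outer terms using the contraction estimates already available. The third term is controlled directly by Lemma \ref{lemma12}: since $\Pi_\Delta$ is invariant, $W_q(\delta_{x_0}\PP_k,\Pi_\Delta) \leq \int (C_2\,\EE|x_0-y|^2)^{q/2}\Pi_\Delta(dy)$, and because $\lim_{k}\frac{\log C_2}{k}<0$ this decays exponentially in $k$, with the moment integral finite by Lemma \ref{lemma11}. The first term requires the continuous-time analogue: under Conditions \ref{ffgg} and \ref{xyff} the exact flow is a contraction in $d_q$ (this is the standard consequence of the dissipativity Condition \ref{xyff}, giving $W_q(\delta_x\bar{\PP}_t,\delta_y\bar{\PP}_t)\leq e^{-\sigma q t/2}|x-y|^q$), so $W_q(\pi,\delta_{x_0}\bar{\PP}_{k\Delta})$ also decays exponentially in $k\Delta$. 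Both outer terms are therefore $O(e^{-ck})$ for some $c>0$.

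The middle term is where the Milstein method earns the convergence rate one. Here I would invoke the standard finite-time strong convergence of the Milstein method for commutative noise: under Conditions \ref{ffgg}, \ref{ligrfg} and \ref{parg} (Lipschitz and linear-growth control on $f$, $g$, and their relevant derivatives), one has a mean-square error bound of the form $\EE|x(k\Delta)-y_k|^2 \leq C\, e^{\kappa k\Delta}\,\Delta^2$ over the interval $[0,k\Delta]$, so that $W_q(\delta_{x_0}\bar{\PP}_{k\Delta},\delta_{x_0}\PP_k) \leq (C e^{\kappa k\Delta})^{q/2}\Delta^q$. The commutativity condition $L^{j_1}g_{j_2}=L^{j_2}g_{j_1}$ is exactly what lets the scheme \eqref{milm} avoid the Lévy-area terms and retain the order-one (mean-square) accuracy that produces the $\Delta^q$ rate rather than the Euler half-rate.

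The main obstacle, and the crux of getting the rate to be exactly one, is balancing the exponential-in-$k\Delta$ blow-up of the finite-time error against the exponential-in-$k$ decay of the two contraction terms. The strategy is to choose $k$ (equivalently the horizon $T=k\Delta$) as a suitable function of $\Delta$ so that all three pieces are simultaneously $O(\Delta^q)$: one takes $k\Delta \to \infty$ slowly, for instance $k\Delta \sim \theta\log(1/\Delta)$ with $\theta$ small enough that $e^{\kappa k\Delta}\Delta^2 = \Delta^{2-\kappa\theta}$ still exceeds order $\Delta^2$ only mildly while $e^{-ck}$ is dominated by $\Delta^q$. I would verify that a single choice of $\theta$ (depending only on $\sigma$, $\kappa$, and the decay constant from Lemma \ref{lemma12}) makes the finite-time term $O(\Delta^q)$ and forces the two contraction terms below $C\Delta^q$ as well; collecting constants then yields $W_q(\pi,\Pi_\Delta)\leq C_3\Delta^q$. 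The delicate bookkeeping is ensuring the constant $\kappa$ in the finite-time bound is small relative to the contraction rates so that such a $\theta$ exists.
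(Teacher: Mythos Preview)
Your three-term triangle-inequality decomposition through $\delta_{x_0}\bar{\PP}_{k\Delta}$ and $\delta_{x_0}\PP_k$ is exactly the one the paper uses, and the ingredients you name for each piece (continuous-time dissipative contraction for the SDE, Lemma~\ref{lemma12} for the scheme, and the classical strong order-one Milstein estimate for the middle term) coincide with the paper's.

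Where you diverge is in trying to make the horizon explicit via $k\Delta\sim\theta\log(1/\Delta)$. The paper does not do this: it simply fixes $\Delta$, picks $k$ large enough that the two contraction terms fall below $\tfrac{C_3}{3}\Delta^q$, and then invokes Milstein's finite-time bound at that now-fixed horizon for the middle term. Since the theorem is phrased as ``for any given $\Delta$ there exists $C_3$'', the paper is content to let $C_3$ absorb the $k$-dependence of the finite-time constant; it never attempts the balancing you describe.

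Your logarithmic balancing, however, does not recover the full rate with a $\Delta$-free constant. With $T=\theta\log(1/\Delta)$ the standard Gronwall-type Milstein bound gives a middle term of order $(e^{\kappa T}\Delta^2)^{q/2}=\Delta^{q(1-\kappa\theta/2)}$, which is strictly worse than $\Delta^q$ for every $\theta>0$, regardless of how $\kappa$ compares to the contraction rates. The obstruction you flag at the end is therefore not merely ``delicate bookkeeping'' but an actual loss of order; optimising over $\theta$ yields at best a rate $\Delta^{qc/(c+\kappa)}$. To obtain a $\Delta$-independent $C_3$ one would instead need a uniform-in-time strong error bound $\sup_{k\geq 0}\EE|x(k\Delta)-y_k|^2\leq C\Delta^2$, which is indeed obtainable under the dissipativity Condition~\ref{xyff} but requires a separate argument that neither your proposal nor the paper's proof supplies.
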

\begin{proof}
Note that for any $q \in (0,2]$
\begin{equation*}
W_q(\delta_x \bar{\PP}_{k\D},\pi) \leq \int_{\RR^d} \pi(dy) ~W_q(\delta_x \bar{\PP}_{k\D},\delta_y \bar{\PP}_{k\D}),
\end{equation*}
and
\begin{equation*}
W_q(\delta_x \PP_{k\D},\Pi_{\D}) \leq \int_{\RR^d} \Pi_{\D}(dy) ~W_q(\delta_x \PP_{k\D},\delta_y \PP_{k\D}).
\end{equation*}
Due to the existence and uniqueness of the invariant measure for the underlying SDE \eqref{SDE} \cite{BSY2016} and Theorem \ref{mainthm}, for the given $\Delta \in (0,\Delta^{\#})$, one can choose $k$ sufficiently large such that
\begin{equation*}
W_q(\delta_x \bar{\PP}_{k\D},\pi) \leq \frac{C_3}{3} \D^{q}~~~\text{and}~~~W_q(\delta_x \PP_{k\D},\Pi_{\D}) \leq \frac{C_3}{3} \D^{q}.
\end{equation*}
In addition, for the chosen $k$, it can be derived from \cite{Milstein1974} that
\begin{equation*}
W_q(\delta_x \bar{\PP}_{k\D},\delta_x \PP_{k\D}) \leq \frac{C_3}{3} \D^q.
\end{equation*}
Therefore, the proof is completed by the triangle inequality.   \eproof
\end{proof}

\section{Numerical simulations} \label{secnumsim}
In this Section, we use a scalar SDE as an example to demonstrate the following two facts.
\begin{itemize}
\item The numerical probability distribution (sometimes called the empirical distribution) converges to the invariant probability distribution in a quite fast speed for the Milstein method.
\item The numerical invariant probability distribution is convergent to the true invariant probability distribution with the rate of one.
\end{itemize}

\begin{expl}
We consider
\begin{equation}\label{eq:expl}
d x(t) = - 5 x(t)dt + (x(t) - 3)dB(t)
\end{equation}
with $x(0) = 2$.
\end{expl}
It is not hard to verify that there exists a unique invariant probability distribution for \eqref{eq:expl}. However, the explicit form of it is hardly found. Therefore, in this example we use the empirical distribution at time $t=5$ generated by the Milstein method as the true invariant probability distribution. More precisely, we simulate 1000 independent paths of \eqref{eq:expl} with $\Delta = 0.001$ from $t = 0$ to $t=5$, which gives us 1000 sample points at $t = 5$. Then the empirical distribution at $t = 5$ is constructed by that 1000 sample points. Here the statistic of the Kolmogorov-Smirnov test (K-S test) \cite{M1951a} is used to measure the difference  between two distributions
\par
Firstly, we test the change in differences between the numerical probability distributions and the true invariant probability distribution along the time line. Here the numerical probability distributions are generated by the Milstein method with $\Delta = 0.01$ and 1000 paths.
\par
The differences between the numerical probability distributions and the true invariant probability distribution along the time line are drew in the left plot of Figure \ref{fig:num}. It can be observed that as time advances the difference tend to zero, which indicates the numerical probability distribution indeed converges to the invariant probability distribution with a quite fast speed.
\par
Next we check the relation between the step size $\Delta$ and the difference between the numerical and true invariant probability distributions. We still regard the numerical probability distribution at large time $t=10$ with the small step size $\Delta = 2^{-5}$ as the true invariant probability distribution, where $100 \times 2^4$ paths are used. We also construct numerical invariant probability distributions with the following pairs of the step size and the number of paths, $(\Delta,m)$ that $(2^{-1}, 100)$, $(2^{-2}, 100 \times 2)$, $(2^{-3}, 100 \times 2^2)$ and $(2^{-4}, 100 \times 2^3)$.
\par
The loglog plot of the differences between the numerical and true invariant probability distributions against the step sizes are drew in the right picture of Figure \ref{fig:num} in green. The red line is the reference line with the slope of one. It can be seen that the convergence rate is approximately one.
\begin{rmk}
It should be mentioned that to get the convergence rate of one, we need to reduce the step size and increase the number of paths simultaneously. This is because that the difference between the numerical and true invariant probability distributions is determined by both errors of the numerical method for SDE and the Monte Carlo method to construct the empirical distribution. Therefore, one may not observe the correct convergence rate by just reducing the step size alone or increasing the number of paths alone.
\end{rmk}
\begin{figure}[ht]
\begin{center}$
\begin{array}{cc}
\includegraphics[width=2.5in]{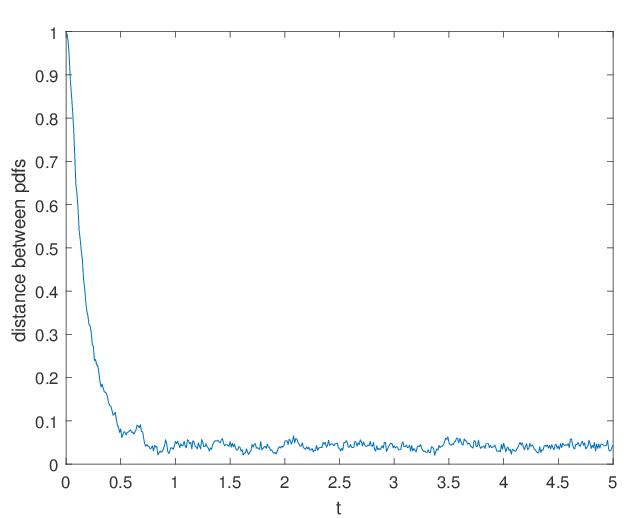}&
\includegraphics[width=2.5in]{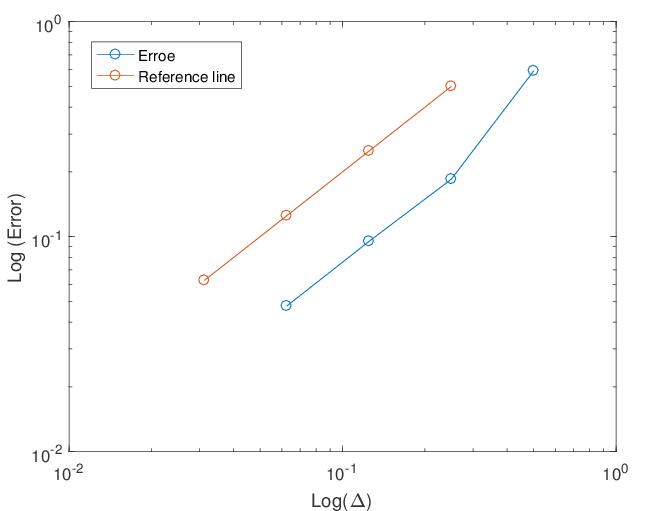}
\end{array}$
\end{center}
\caption{Left: the difference between the empirical and true distributions along the time line. Right: Loglog plot of errors against step sizes.}
\label{fig:num}
\end{figure}

\section{Conclusion} \label{secconclu}
In this paper, the Milstein method is proposed to approximate the invariant measure of a class of SDEs.
\par
The sufficient conditions on the drift and diffusion coefficients are provided to guarantee the convergence of the numerical invariant measure to the underlying one in the Wasserstein distance. The decay rate of the numerical transition probability kernel to the numerical invariant is observed to be exponential. The convergence rate of numerical invariant measure to the underlying invariant measure is shown to be one. Some numerical simulations are conducted to demonstrate the theoretical results.

\section*{Acknowledgement}
The authors would like to thank
the National Natural Science Foundation of China (11701378, 11871343),
``Chenguang Program'' supported by both Shanghai Education Development Foundation and Shanghai Municipal Education Commission (16CG50)
and
Shanghai Pujiang Program (16PJ1408000),
for their financial support.

\end{document}